\renewcommand{\doitext}{\noexpand\textsc{doi:}}
\theoremstyle{definition}
\newtheorem{defi}{Definition} 
\theoremstyle{plain}
\newtheorem{theorem}[defi]{Theorem} 
\newcommand{\R}{\mathbb{R}}
\renewcommand{\S}{\mathbb{S}}
\begin{document}

  \vskip0.5cm

  \begin{center}
 
 \noindent
 {\Large \bf  
New Lie systems from Goursat distributions:\\[4pt] reductions and reconstructions} 
   \end{center}

\medskip

\begin{center}

{\sc Oscar Carballal$^{1,2}$}

\end{center}

\medskip

\noindent
$^1$ Departamento de \'Algebra, Geometr\'{\i}a y Topolog\'{\i}a,  Facultad de Ciencias 
Matem\'aticas, Universidad Complutense de Madrid, Plaza de Ciencias 3, E-28040 Madrid, Spain 

\noindent
$^2$ Instituto de Matem\'atica Interdisciplinar, Universidad Complutense de Madrid, E-28040 Madrid,  Spain

 \medskip
 
\noindent  E-mail: {\small  \href{mailto:oscarbal@ucm.es}{\texttt{oscarbal@ucm.es}}}

\begin{abstract} 
\noindent
We show that types of bracket-generating distributions lead to new classes of Lie systems with compatible geometric structures. Specifically, the $n$-trailer system is analysed, showing that its associated distribution is related to a Lie system if $n = 0$ or $n = 1$. These systems allow symmetry reductions and the reconstruction of solutions of the original system from those of the reduced one. The reconstruction procedure is discussed and indicates potential extensions for studying broader classes of differential equations through Lie systems and new types of superposition rules.
\end{abstract}
\medskip
\medskip

  \noindent
\textbf{Keywords}: superposition rule; Lie system; $n$-trailer system; bracket-generating distribution; reconstruction process

\noindent
\textbf{MSC 2020 codes}:  34A26 (Primary), 34C14, 34K17,  58A30 (Secondary)

\medskip
 
\newpage
  
\tableofcontents

\section{Introduction}
{\it Lie systems} are first-order systems of $t$-dependent ordinary differential equations (ODEs) whose general solution can be expressed through a $t$-independent function, a {\it superposition rule}, in terms of a generic finite family of particular solutions and some constants related to the initial conditions \cite{Lucas2020}. 

Although Lie systems are more of an exception than a rule among ODEs \cite{Lucas2020}, they admit significant mathematical properties and applications in physics. Notably, they can be made compatible with many geometric structures \cite{Lucas2020}. Lie systems on the real line and the plane have been extensively studied and classified under local diffeomorphisms at generic points \cite[Chapter~4]{Lucas2020}. However, those in higher dimensions remain largely unexplored \cite{Campoamor-Stursberg2025a,CampoamorStursberg2025}. In this context, we show that some {\it Goursat distributions}, to which considerable attention has been given in differential geometry, $k$-contact geometry and topology \cite{PasillasLepine2001,Lucas2025,Montgomery2006}, can  give rise to new applications of Lie systems.

Every $t$-dependent system of  ODEs  on a  manifold $M$
of the form 
\begin{equation}
    \dv{x}{t} = X(t,x)
\label{eq:system}
\end{equation}
is univocally associated with a $t$-dependent vector field $X: \mathbb{R} \times M \to \mathrm{T} M$, whose integral curves correspond to the solutions of the system. A superposition rule for \eqref{eq:system} is a $t$-independent map $\Psi: M^{s} \times M \to M$ through which the general solution $x(t)$ of the system can be expressed as
$$
x(t) = \Psi(x_{(1)}(t), \ldots, x_{(s)}(t); \lambda),
$$
where $x_{(1)}(t), \ldots, x_{(s)}(t)$ are generic particular solutions and $\lambda \in M$ is a point related to the initial conditions. The renowned Lie–Scheffers theorem \cite[Section 3.8]{Lucas2020} states that  system \eqref{eq:system} is a Lie system if and only if the smallest Lie algebra $V^X$ containing the vector fields $\{X_t:= X(t, \cdot)\}_{t \in \mathbb{R}}$ on $M$ is finite-dimensional. Any finite-dimensional Lie algebra $V$ such that $\{X_{t}\}_{t \in \R} \subset V$ is called a {\it Vessiot--Guldberg  Lie algebra} (VG Lie algebra, hereafter) of $X$.

A {\it bracket-generating distribution} (also called {\it non-holonomic}) on $M$ is a regular distribution $\mathcal{D} \subset \mathrm{T}M$ whose associated {\it derived flag }
$$
\mathcal{D}^{(0)} \subset \mathcal{D}^{(1)} \subset \cdots \subset \mathcal{D}^{(i)} \subset \cdots \subset \mathrm{TM},
$$
inductively defined by $\mathcal{D}^{(0)}:= \mathcal{D}$ and $\mathcal{D}^{(i+1)} := \mathcal{D}^{(i)} + [\mathcal{D}^{(i)}, \mathcal{D}^{(i)}]$ for $i \geq 0$, satisfies $\mathcal{D}^{(r)} = \mathrm{T}M$ for some $r \geq 1$ (see \cite{Montgomery2006}). A {\it Goursat distribution} on $M$, with $\dim M \geq 3$, is a regular distribution $\mathcal{D} \subset \mathrm{T}M$ fulfilling the {\it Goursat condition} 
$$\mathrm{rk}\: \mathcal{D}^{(i)} = i + 2, \qquad 0 \leq i \leq \dim M -2.$$
Thus, Goursat distributions form a special class of bracket-generating distributions of rank two. If $\dim M = 3$, they correspond to {\it contact distributions}, while if $\dim M = 4$, they are known as {\it Engel distributions} (see \cite[Chapter 6]{Montgomery2006}).

The structure of this paper goes as follows. In Section~\ref{section:trailer}, we examine the {\it $n$-trailer system} \cite{Bravo-Doddoli2015,Montgomery2001,PasillasLepine2001} for the cases $n = 0, 1$, demonstrating that its associated constraint distribution, which is a Goursat distribution \cite{PasillasLepine2001}, gives rise to Lie systems. The resulting systems are shown to be invariant under a principal Lie group action, thereby inducing reduced Lie systems on the corresponding quotient spaces. Furthermore, invariant forms yielding principal connection forms on the associated principal bundles are constructed. More concretely, in the case of the $0$-trailer system,  a contact form is obtained, turning the system into a contact Lie system \cite{Lucas2020}, while for the $1$-trailer system, a novel method for constructing invariant forms is introduced, extending the approach developed in \cite{Gracia2019}.  This ansatz enables the reconstruction of solutions of the original system from those of the reduced one, and the reconstruction procedure is analysed in detail in both cases. Finally,  Section~\ref{section:reconstruction} discusses the reconstruction procedure for Lie systems in the general setting, as established in Theorem~\ref{thm:reconstruction}, and suggests potential extensions of this framework for the study of broader classes of first-order systems of ODEs via Lie systems through a reconstruction procedure. 

\section{The $n$-trailer system} \label{section:trailer}
The $n$-trailer system, studied in differential geometry, mechanics, and control theory (see \cite[Section 3]{PasillasLepine2001}, \cite{Bravo-Doddoli2015}, \cite[Appendix D]{Montgomery2001}), consists of a leading car towing $n \geq 0$ trailers, with the tow hook of each trailer located at the centre of its single axle. This system is subjected to $(n+1)$-nonholonomic constraints. The constraint distribution is a Goursat distribution and, moreover, all possible germs of Goursat distributions of corank $n+1$ are realized by the $n$-trailer system at its different points \cite{Montgomery2001,PasillasLepine2001}.

 Specifically, on $\R^{2} \times (\S^{1})^{n+1}$ with local coordinates $(\xi_{1}, \xi_{2}, \theta_{0}, \ldots, \theta_{n})$, the constraint distribution $\mathcal{D}$ of the $n$-trailer system is spanned by the linearly independent vector fields 
 \begin{equation*}
       X_{1}:=\frac{\partial}{\partial\theta_{n}},\,\,\,\,
       X_{2}:=\pi_{0}\cos(\theta_{0})\frac{\partial}{\partial \xi_{1}}+\pi_{0}\sin(\theta_{0})\frac{\partial}{\partial \xi_{2}}+\sum_{i=0}^{n-1}\pi_{i+1}\sin(\theta_{i+1}-\theta_{i})\frac{\partial}{\partial \theta_{i}},
 \end{equation*}
where $\pi_{i}:=\prod_{j=i+1}^n\cos(\theta_{j}-\theta_{j-1})$ for $0 \leq i \leq n-1$ and $\pi_{n}:=1$. The coordinates $\xi_1$ and $\xi_2$ represent the position of the last trailer, while $\theta_0, \dots, \theta_n$ denote the angles between each trailer’s axle and the $\xi_1$-axis, starting from the last trailer.

Let us now consider the $t$-dependent vector field on $\R^{2} \times (\S^{1})^{n+1}$ given by 
\begin{equation}
X := b_{1}(t) X_{1} + b_{2}(t) X_{2},
    \label{eq:trailer:tdep}
\end{equation}
where $b_{1}(t), b_{2}(t) \in C^{\infty}(\R)$ are linearly independent $t$-dependent functions that can be interpreted as control inputs of the $n$-trailer system. For $n = 2$, it can be demonstrated, albeit through a lengthy yet routine computation, that the Lie algebra $V^{X} = \langle X_{1}, X_{2} \rangle$  is infinite-dimensional. Hence, \eqref{eq:trailer:tdep} does not define a Lie system in this case.  Indeed, the expressions of the elements of $V^{X}$ become progressively more intricate as  $n \geq 2$ increases \cite{Jean1996}. 

Let us  examine \eqref{eq:trailer:tdep} for $n = 0$ and $n = 1$, proving that it is a Lie system in these cases. The case $n = 0$ corresponds to the Chaplygin sleigh system on $\R^{2} \times \S^{1}$ \cite{Chaplygin2008}. In this case, \eqref{eq:trailer:tdep} reads
\begin{equation}
X = b_{1}(t) X_{1} + b_{2}(t) X_{2}= b_{1}(t) \pdv{\theta_{0}} + b_{2}(t) \left( \cos(\theta_{0}) \pdv{\xi_{1}} + \sin(\theta_{0}) \pdv{\xi_{2}} \right),
    \label{eq:t1:tdep}
\end{equation}
and the vector fields $X_{1}$ and $X_{2}$ span a contact distribution on $\R^{2} \times \S^{1}$, as 
$$
X_{3}:= [X_{1}, X_{2}] = - \sin(\theta_{0}) \pdv{\xi_{1}} + \cos(\theta_{0}) \pdv{\xi_{2}}
$$
and $X_{1} \wedge X_{2} \wedge X_{3}$ is non-vanishing. Moreover, 
\begin{equation}
[X_{1}, X_{2}] = X_{3}, \qquad [X_{1}, X_{3}] = - X_{2}, \qquad [X_{2}, X_{3}] = 0,
    \label{eq:iso2}
\end{equation}
and they span a VG Lie algebra $V^{X} \simeq \mathfrak{iso}(2)$. This is a locally automorphic Lie system. That is, the vector fields of $V^{X}$ span $\mathrm{T}(\R^{2} \times \S^{1})$ and $\dim V^{X} = \dim (\R^{2} \times \S^{1})$. Consequently, the Lie algebra of Lie symmetries of $V^{X}$, $\mathrm{Sym}(V^{X})$, formed by all the vector fields $Y \in \mathfrak{X}(\R^{2} \times \S^{1})$ such that $\mathcal{L}_{Y}Z = 0$ for all $Z \in V^{X}$, is isomorphic to $\mathfrak{iso}(2)$ as well  \cite{Gracia2019}. A routine computation shows that 
$$Y_{1} := - \xi_{2} \pdv{\xi_{1}} + \xi_{1} \pdv{\xi_{2}} + \pdv{\theta_{0}}, \qquad Y_2:=\frac{\partial}{\partial \xi_1}, \qquad Y_3:=\frac{\partial}{\partial \xi_2},$$
with opposite commutation rules to \eqref{eq:iso2}, generate $\mathrm{Sym}(V^{X})$. Let now $\{\alpha^{1}, \alpha^{2}, \alpha^{3}\}$ be the dual frame to $\{Y_{1}, Y_{2}, Y_{3}\}$, namely
$$ \alpha_{1} = \dd \theta_{0}, \qquad \alpha_{2} = \dd \xi_{1} + \xi_{2} \dd \theta_{0}, \qquad \alpha_{3} = \dd \xi_{2} - \xi_{1} \dd \theta_{0}.
$$
Clearly, $
\dd \alpha_{1} = 0$, $\dd \alpha_{2} =- \alpha_{1} \wedge \alpha_{3}$, $
\dd \alpha_{3} =  \alpha_{1} \wedge \alpha_{2}$
and  $\alpha_{2}$ and $\alpha_{3}$ are contact forms on $\R^{2} \times \S^{1}$ (that is, they satisfy $\alpha_{j} \wedge \dd \alpha_{j} \neq 0$ for $j = 2, 3$). Moreover, as they are invariant forms for $V^{X}$ \cite{Gracia2019}, they turn $X$ into a contact Lie system of Liouville type concerning each of these contact structures \cite{Lucas2023}. Of course, their associated Reeb vector fields are $Y_{2}$ and $ Y_{3}$, respectively.  
 Remarkably, the Reeb vector field $Y_{3}$ gives rise to an $\mathbb{R}$-principal action $\Phi: \mathbb{R} \times (\mathbb{R}^{2} \times \mathbb{S}^{1}) \to \mathbb{R}^{2} \times \mathbb{S}^{1}$ given by translations in the $\xi_{2}$-axis. Its associated $\mathbb{R}$-principal bundle is the trivial $\mathbb{R}$-bundle
 $ \pi: \mathbb{R}^{2} \times \mathbb{S}^{1} \to \mathbb{R}\times \mathbb{S}^{1}$. The $t$-dependent vector field \eqref{eq:t1:tdep} projects via $\pi$ onto the Lie system with VG Lie algebra $\langle \pi_{*}X_{1}, \pi_{*}X_{2}, \pi_{*}X_{3} \rangle$ given by
\begin{equation}
\pi_{*} X = b_{1}(t) \pdv{\theta_{0}} + b_{2}(t) \cos(\theta_{0}) \pdv{\xi_{1}}.
    \label{eq:1t:proj}
\end{equation}
Solutions of system \eqref{eq:t1:tdep} on \( \mathbb{R}^2 \times \S^1 \) can be obtained from those of the reduced system \eqref{eq:1t:proj} following a reconstruction approach \cite{Marsden1990}. The following demonstrates how this reconstruction problem can be tackled in this context.

Let $\gamma: \mathbb{R} \ni t \mapsto \gamma(t) \in \mathbb{R} \times \mathbb{S}^{1}$ be a generic solution of the reduced Lie system \eqref{eq:1t:proj}. The contact form $\alpha_{3}$ is a principal connection form 
on the bundle $\pi: \mathbb{R}^{2} \times \mathbb{S}^{1} \to \mathbb{R} \times \mathbb{S}^{1}$. Next, consider the system on $\mathbb{R}^{2} \times \mathbb{S}^{1}$ given by
\begin{equation*}
        \widetilde{X}_{t} := X_{t} - \alpha_{3}(X_{t}) Y_{3} = b_{1}(t) \left( \pdv{\theta_{0}} + \xi_{1} \pdv{\xi_{2}} \right) + b_{2}(t) \cos(\theta_{0}) \pdv{\xi_{1}}, \qquad t \in \mathbb{R},
\end{equation*}
which is the horizontal lift of $\pi_{*}X$ with respect to $\alpha_{3}$. That is, $\widetilde{X}_{t} \in \ker \alpha_{3}$ and $\pi_{*} \widetilde{X}_{t} = \pi_{*}X_{t}$ for all $t \in \mathbb{R}$. Consequently, the solutions of $\widetilde{X}$ are the horizontal lifts of the solutions of $\pi_{*}X$ with respect to $\alpha_{3}$. Let us denote by $\widetilde{\gamma}$ the horizontal lift of $\gamma$ starting at a point $x_{0} \in \pi^{-1}(\gamma(0))$, and let $g(t)$ be  any solution of  the Lie system on $\mathbb{R}$ given by 
\begin{equation*}
        \dv{g}{t} = \alpha_{3}(X_{t}(\widetilde{\gamma}(t)))=  - b_{1}(t) \xi_{1}(t) + b_{2}(t)\sin(\theta_{0}(t)).
\end{equation*}
It follows that $\mathbb{R} \ni t  \mapsto x(t):=\Phi(g(t), \widetilde{\gamma}(t)) \in \mathbb{R}^{2} \times \mathbb{S}^{1} $
is a solution of system \eqref{eq:trailer:tdep} on $\R^{2} \times \S^{1}$ with the initial condition $x(0) = \Phi(g(0),x_{0})$ (see Theorem~\ref{thm:reconstruction}). This example closely aligns with the ansatz introduced in \cite{Carinena2001}, though it is now approached from a contact geometric perspective. More importantly, these ideas can be extended to broader scenarios, as shown next.

Let us now study the system \eqref{eq:trailer:tdep} on $\R^{2} \times \mathbb{T}^{2}$  associated with the $1$-trailer system, where $\mathbb{T}^{2} := \S^{1} \times \S^{1}$ is the $2$-torus. In this case, the $t$-dependent vector field $X = b_{1}(t)X_{1} + b_{2}(t) X_{2}$ from  \eqref{eq:trailer:tdep} reads 
\begin{equation}
    \begin{split}
        X &= b_{1}(t) \pdv{\theta_{1}}\\ 
        &+ b_{2}(t) \left(\cos(\theta_1-\theta_0)\left(\cos (\theta_0)\frac{\partial}{\partial \xi_1}+\sin(\theta_0)\frac{\partial}{\partial \xi_2}\right)+\sin(\theta_1-\theta_0)\frac{\partial}{\partial \theta_0} \right),
    \end{split}
    \label{eq:t2:tdep}
\end{equation}
with $X_{1}$ and $X_{2}$  spanning an Engel distribution on $\R^{2} \times \mathbb{T}^{2}$.
These vector fields generate the six-dimensional Lie algebra $V^{X}$, on which the $t$-dependent vector field  \eqref{eq:t2:tdep} takes values, showing that it is a Lie system. A basis for $V^{X}$ consists of $X_{1}, X_{2}$, together with the vector fields
\begin{equation*}
\begin{split}
     &X_{3} := -\sin(\theta_{1}- \theta_{0}) \left( \cos(\theta_{0}) \pdv{\xi_{1}} + \sin(\theta_{0}) \pdv{\xi_{2}} \right) + \cos(\theta_{1}- \theta_{0}) \pdv{\theta_{0}}, \\
      &X_{4} := \sin(\theta_{0}) \pdv{\xi_{1}}  - \cos(\theta_{0}) \pdv{\xi_{2}} + \pdv{\theta_{0}}, \\
      &X_{5} := \cos(\theta_{1} - \theta_{0}) \left( \sin(\theta_{0}) \pdv{\xi_{1}} - \cos(\theta_{0}) \pdv{\xi_{2}} + \pdv{\theta_{0}}\right), \\
      &X_{6} := - \sin(\theta_{1}- \theta_{0}) \left( \sin(\theta_{0}) \pdv{\xi_{1}} - \cos(\theta_{0}) \pdv{\xi_{2}} + \pdv{\theta_{0}}\right).
      \end{split}
\end{equation*}
Their non-zero commutation relations read
\begin{equation*}
    \begin{aligned}
&[X_{1}, X_{2}] = X_{3}, \qquad &&[X_{1}, X_{3}] = - X_{2}, \qquad && [X_{1}, X_{5}] = X_{6}, \\
& [X_{1}, X_{6}] = - X_{5}, \qquad && [X_{2}, X_{3}] = X_{4}, \qquad && [X_{2}, X_{4}] = X_{5}, \\
& [X_{2}, X_{5}] = X_{4}, \qquad && [X_{3}, X_{4}] = X_{6}, \qquad && [X_{3}, X_{6}] = X_{4}, \\
& [X_{4}, X_{5}] = - X_{6}, \qquad && [X_{4}, X_{6}] = X_{5}, \qquad && [X_{5}, X_{6}] = X_{4}.
    \end{aligned}
\end{equation*}
Hence, $V^{X}$ is a decomposable Lie algebra isomorphic to $\mathfrak{sl}(2, \mathbb{R}) \oplus \mathfrak{iso}(2)$.   More concretely, $\langle X_{4}, X_{5}, X_{6} \rangle \simeq \mathfrak{sl}(2, \mathbb{R})$, while $\langle X_{1} + X_{4}, X_{2}+ X_{6}, X_{3}- X_{5} \rangle \simeq \mathfrak{iso}(2)$.  Thus, $X$ is not a locally automorphic Lie system and no general method to derive invariant tensors for $V^{X}$ is known \cite{Gracia2019}, as was previously done for the system associated with the $0$-trailer case. However, taking into account that the subspace $\langle X_{1} + X_{4}, X_{2} + X_{6}, X_{3} - X_{5} \rangle$ forms a Lie subalgebra of $V^{X}$ with linearly independent generators, the problem can be approached using a novel ansatz.

Let us denote $Z_{1} := X_{1}+X_{4}$, $Z_{2}:= X_{2}+X_{6}$ and $Z_{3} := X_{3}-X_{5}$. We have  that $Z_{1} \wedge Z_{2} \wedge Z_{3} \wedge X_{4}$ is non-vanishing. Thus, $\langle Z_{1}, Z_{2}, Z_{3}, X_{4} \rangle \simeq \mathfrak{iso}(2) \oplus \mathbb{R}$ is a locally automorphic VG Lie algebra with the following  commutation rules
\begin{equation}
[Z_{1}, Z_{2}] = Z_{3}, \qquad [Z_{1}, Z_{3}] = -Z_{2}, \qquad [Z_{2}, Z_{3}] = [X_{4}, \cdot] = 0.
    \label{eq:1t:cr}
\end{equation}
Its associated Lie algebra of Lie symmetries is spanned by 
\begin{equation*}
    \begin{split}
        &Y_{1}:= \xi_{1} \pdv{\xi_{2}} - \xi_{2}\pdv{\xi_{1}} + \pdv{\theta_{0}} + \pdv{\theta_{1}}, \qquad Y_{2}:= \pdv{\xi_{1}}, \\
        &Y_{3}:= \pdv{\xi_{2}}, \qquad Y_{4} := \sin(\theta_{0}) \pdv{\xi_{1}} - \cos(\theta_{0}) \pdv{\xi_{2}} + \pdv{\theta_{0}},
    \end{split}
\end{equation*}
with opposite commutation rules to \eqref{eq:1t:cr}. In particular, $Y_{2}$ and $Y_{3}$ are Lie symmetries for $V^{X}$. Consider now the dual frame to $\{Y_{1}, \ldots, Y_{4}\}$ given by  
\begin{equation*}
    \begin{split}
       & \alpha_{1} = \dd \theta_{1} , \qquad \alpha_{2} = \dd \xi_{1} + \xi_{2} \dd \theta_{1} - \sin(\theta_{0})(\dd \theta_{0} - \dd \theta_{1}), \\
       & \alpha_{3} = \dd \xi_{2} + \cos(\theta_{0})(\dd \theta_{0} - \dd \theta_{1}) - \xi_{1} \dd \theta_{1}, \qquad \alpha_{4} = \dd \theta_{0} - \dd \theta_{1}.
    \end{split}
\end{equation*}
Since every invariant form for $V^{X}$ is likewise invariant for $\langle Z_{1}, Z_{2}, Z_{3}, X_{4} \rangle$, any such form must be a linear combination with real coefficients of exterior products of $\alpha_{1}, \ldots, \alpha_{4}$. Among them,  $\alpha_{2}$ and $\alpha_{3}$ are also invariant forms for the elements of $V^{X}$. In other words, our idea gives a method to find invariant forms.  The Lie symmetry $Y_{3}$ gives rise to an $\mathbb{R}$-principal action $\Phi: \mathbb{R} \times (\mathbb{R}^{2} \times \mathbb{T}^{2}) \to \mathbb{R}^{2} \times \mathbb{T}^{2}$ given by translations in the $\xi_{2}$-axis of $\mathbb{R}^{2}$. Its associated  $\mathbb{R}$-principal bundle is the trivial bundle $\pi: \mathbb{R}^{2} \times \mathbb{T}^{2} \to \mathbb{R} \times \mathbb{T}^{2}$, on which  
$\alpha_{3}$ defines a principal connection. 

Let $\gamma: \mathbb{R} \ni t \mapsto \gamma(t) \in \mathbb{R} \times \mathbb{T}^{2}$ be a generic solution of the reduced Lie system $\pi_{*}X = b_{1}(t)X_{1} + b_{2}(t) \pi_{*}X_{2}$, where 
\begin{equation*}
\pi_{*}X_{1} = \pdv{\theta_{1}}, \qquad \pi_{*}X_{2} =  \cos(\theta_{1}- \theta_{0}) \cos(\theta_{0}) \pdv{\xi_{1}}  +\sin(\theta_{1}-\theta_{0}) \pdv{\theta_{0}},
\end{equation*}
and consider its horizontal lift $\widetilde{\gamma}: \mathbb{R} \ni t \mapsto \widetilde{\gamma}(t) \in \mathbb{R}^{2} \times \mathbb{T}^{2}$ with respect to $\alpha_{3}$ starting at point $x_{0} \in \pi^{-1}(\gamma(0))$, which is a solution of the horizontal lift 
$$
\widetilde{X} := b_{1}(t) (X_{1} - \alpha_{3}(X_{1})Y_{3}) +b_{2}(t) (X_{2} - \alpha_{3}(X_{2})Y_{3})
$$
of $\pi_{*}X$ with respect to $\alpha_{3}$.  Next,  we consider the Lie system on  $\R$ given by 
\begin{equation*}
\begin{split}
   &\dv{g}{t} = \alpha_{3}(X(\tilde{\gamma} (t))) = -b_{1}(t) (\xi_{1}(t) + \cos(\theta_{0}(t))) + b_{2}(t) \sin(\theta_{1}(t)),
   \end{split}
\end{equation*}
and let $g(t)$ be any solution of it. Then, $
\R \ni t \mapsto x(t):= \Phi(g(t), \widetilde{\gamma}(t)) \in \R^{2} \times \mathbb{T}^{2} $
is a solution of the Lie system \eqref{eq:trailer:tdep} on $\R^{2} \times \mathbb{T}^{2}$ such that $x(0) = \Phi(g(0),x_{0})$ (see Theorem~\ref{thm:reconstruction}).

\section{Reconstruction of Lie systems} \label{section:reconstruction}
The following shows how to solve the reconstruction problem \cite{Marsden1990} for Lie systems.
\begin{theorem}
\label{thm:reconstruction}
    Let $X$ be a Lie system on a manifold $M$ possessing a VG Lie algebra $V$ formed by $G$-invariant vector fields with respect to a principal Lie group action $\Phi: G \times M \to M$. Let $\bm{\eta}$  be a principal connection form on the associated principal bundle $\pi: M \to M/G$. Suppose the following:
    \begin{itemize}
        \item[(i)] $\gamma: \mathbb{R} \ni t \mapsto \gamma(t) \in M/G$ is a generic solution of the reduced Lie system $\pi_{*}X$ on $M/G$, and let $\widetilde{\gamma}: \mathbb{R} \ni t \mapsto \widetilde{\gamma}(t) \in M/G$ be its  horizontal lift  with respect to $\bm{\eta}$ starting at a point $x_{0} \in \pi^{-1}(\gamma(0))$, which is a solution of the horizontal lift $\widetilde{X}$ of $\pi_{*}X$ with respect to $\bm{\eta}$; and 
        \item[(ii)] $g: \mathbb{R} \ni t \mapsto g(t) \in G$ is any solution of the  Lie system on $G$ given by 
        \begin{equation*}
        \dv{g}{t} = \mathrm{T}_{e}L_{g(t)}\left[ \bm{\eta}(X_{t}(\widetilde{\gamma}(t))) \right].
            \label{eq:systemgroup}
        \end{equation*}
    \end{itemize}
    Then, $\mathbb{R} \ni t \mapsto x(t):= \Phi(g(t),\widetilde{\gamma}(t))$ is a solution of $X$ such that  $x(0) =\Phi(g(0),x_{0})$.
\end{theorem}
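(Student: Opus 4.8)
The plan is to verify directly that the curve $x(t) := \Phi(g(t), \widetilde{\gamma}(t))$ satisfies $\dot{x}(t) = X_t(x(t))$; the initial condition $x(0) = \Phi(g(0), x_0)$ is then immediate, since $\widetilde{\gamma}(0) = x_0$. Writing $\Phi_g := \Phi(g, \cdot): M \to M$, I would differentiate $t \mapsto \Phi(g(t), \widetilde{\gamma}(t))$ by the Leibniz rule for the bivariate map $\Phi$, splitting $\dot{x}(t)$ into the term obtained by freezing $g(t)$ while $\widetilde{\gamma}(t)$ varies, and the term obtained by freezing $\widetilde{\gamma}(t)$ while $g(t)$ varies. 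The first term is $\mathrm{T}_{\widetilde{\gamma}(t)}\Phi_{g(t)}\big(\dot{\widetilde{\gamma}}(t)\big) = \mathrm{T}_{\widetilde{\gamma}(t)}\Phi_{g(t)}\big(\widetilde{X}_t(\widetilde{\gamma}(t))\big)$, using hypothesis (i), namely that $\widetilde{\gamma}$ solves the horizontal lift $\widetilde{X}$.

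For the second term I would set $\xi(t) := \bm{\eta}(X_t(\widetilde{\gamma}(t))) \in \mathfrak{g}$, so that hypothesis (ii) reads $\dot{g}(t) = \mathrm{T}_e L_{g(t)}[\xi(t)]$. Using $\dot{g}(t) = \frac{\mathrm{d}}{\mathrm{d}s}\big|_{s=0} g(t)\exp(s\xi(t))$ together with the left-action identity $\Phi(g(t)\exp(s\xi(t)), p) = \Phi_{g(t)}(\Phi(\exp(s\xi(t)), p))$, the chain rule gives that this term equals $\mathrm{T}_{\widetilde{\gamma}(t)}\Phi_{g(t)}\big(\xi(t)_M(\widetilde{\gamma}(t))\big)$, where $\xi_M \in \mathfrak{X}(M)$ denotes the fundamental vector field of $\xi \in \mathfrak{g}$, $\xi_M(q) = \frac{\mathrm{d}}{\mathrm{d}s}\big|_{s=0}\Phi(\exp(s\xi), q)$. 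Both contributions therefore factor through the single linear map $\mathrm{T}_{\widetilde{\gamma}(t)}\Phi_{g(t)}$ applied to $\widetilde{X}_t(\widetilde{\gamma}(t)) + \xi(t)_M(\widetilde{\gamma}(t))$.

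The crux is then to recognise that this sum collapses to $X_t(\widetilde{\gamma}(t))$. Indeed, since $\bm{\eta}$ is a principal connection form it reproduces the generators, $\bm{\eta}(\zeta_M) = \zeta$ for all $\zeta \in \mathfrak{g}$, so $q \mapsto (\bm{\eta}(v))_M(q)$ is precisely the vertical projection, and $\widetilde{X}_t = X_t - (\bm{\eta}(X_t))_M$ is the horizontal part of $X_t$; as $X_t$ projects onto $\pi_* X_t$, this is exactly the horizontal lift of $\pi_* X_t$. Evaluating at $\widetilde{\gamma}(t)$ and recalling $\xi(t) = \bm{\eta}(X_t(\widetilde{\gamma}(t)))$ yields $\widetilde{X}_t(\widetilde{\gamma}(t)) + \xi(t)_M(\widetilde{\gamma}(t)) = X_t(\widetilde{\gamma}(t))$, hence $\dot{x}(t) = \mathrm{T}_{\widetilde{\gamma}(t)}\Phi_{g(t)}\big(X_t(\widetilde{\gamma}(t))\big)$. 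Finally, because $V$ consists of $G$-invariant vector fields and $X_t \in V$, invariance $(\Phi_{g(t)})_* X_t = X_t$ gives $\mathrm{T}_{\widetilde{\gamma}(t)}\Phi_{g(t)}(X_t(\widetilde{\gamma}(t))) = X_t(\Phi_{g(t)}(\widetilde{\gamma}(t))) = X_t(x(t))$, completing the verification.

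I expect the main obstacle to be the bookkeeping in the second step: correctly identifying the contribution of the moving group element as the fundamental vector field $\xi(t)_M$ evaluated along $\widetilde{\gamma}$ and then pushed forward by $\mathrm{T}_{\widetilde{\gamma}(t)}\Phi_{g(t)}$, which hinges on the left-trivialisation convention encoded in hypothesis (ii) (the factor $\mathrm{T}_e L_{g(t)}$) and on the composition law $\Phi(ab, \cdot) = \Phi_a \circ \Phi_b$. The two remaining ingredients — that $(\bm{\eta}(\cdot))_M$ is the vertical projection, so the fundamental term exactly restores the piece removed in passing to the horizontal lift, and the $G$-invariance of each $X_t$ — are routine individually, but it is their interplay that makes the reconstruction work: the equation in (ii) is designed precisely so that the vertical defect of the horizontal lift is compensated by the group motion.
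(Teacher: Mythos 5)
Your proof is correct, but it finishes by a genuinely different argument than the paper's. Both start from the same product-rule decomposition of $\dot{x}(t)$ into the pushforward $\mathrm{T}_{\widetilde{\gamma}(t)}\Phi_{g(t)}\bigl(\dot{\widetilde{\gamma}}(t)\bigr)$ plus the contribution of the moving group element; you derive this formula from scratch via the left-action composition law, whereas the paper cites it from Marsden--Montgomery--Ratiu. From there the paper checks the horizontal and vertical components of $\dot{x}(t)$ against those of $X_t(x(t))$ separately: the horizontal part is compared after projecting by $\mathrm{T}\pi$, and the vertical part by applying $\bm{\eta}$, which forces it to invoke the $\mathrm{Ad}$-equivariance of the principal connection form (twice, once in each direction, so the adjoint factors cancel) together with the $G$-invariance of $[\bm{\eta}(X_t)]^{\#}$, and, implicitly, the invariance of the horizontal distribution when the first term is discarded under $\bm{\eta}$. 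You instead collapse the two tangent vectors \emph{before} pushing forward: since the horizontal lift satisfies $\widetilde{X}_t = X_t - [\bm{\eta}(X_t)]^{\#}$ (i.e., $X_t$ minus its vertical part, which is exactly how the paper constructs $\widetilde{X}$ in its examples, e.g.\ $\widetilde{X}_t = X_t - \alpha_3(X_t)Y_3$), and $\xi(t) = \bm{\eta}(X_t(\widetilde{\gamma}(t)))$ restores precisely that vertical part at $\widetilde{\gamma}(t)$, the sum equals $X_t(\widetilde{\gamma}(t))$, and a single application of the $G$-invariance of $X_t$ concludes. Your route is more economical and self-contained: it needs only the reproducing property $\bm{\eta}(\zeta^{\#}) = \zeta$ and entirely sidesteps the equivariance of $\bm{\eta}$ (and hence any convention issues with $\mathrm{Ad}_{g}$ versus $\mathrm{Ad}_{g^{-1}}$ for a left action), while the paper's component-wise verification makes the geometric matching of horizontal and vertical parts explicit and stays closer to the standard reconstruction argument in the reference it follows.
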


\begin{proof}
First of all, denote by $^{\#}: \mathfrak{g} \to \mathfrak{X}(M)$ the anti-homomorphism of Lie algebras mapping every $v \in \mathfrak{g}$ into its infinitesimal generator $v^{\#} \in \mathfrak{X}(M)$ given by $v^{\#}(p):= \dv{t} \vert_{t = 0}\Phi(\exp(tv),p)$ for all $p \in M$.

We have that (see \cite[p. 19]{Marsden1990})
    \begin{equation}
        \dv{x}{t} = \mathrm{T}_{\widetilde{\gamma}(t)} \Phi_{g(t)} \left( \dv{\widetilde{\gamma}}{t} \right) + \mathrm{T}_{\widetilde{\gamma}(t)} \Phi_{g(t)} \left(\mathrm{T}_{g(t)} L_{g(t)^{-1}} \left( \dv{g}{t} \right) \right)^{\#}(\widetilde{\gamma}(t)).
        \label{eq:sol}
    \end{equation}
    On one hand, the horizontal component of \eqref{eq:sol} projects onto 
    \begin{equation}
    \begin{split}
        \mathrm{T}_{x(t)} \pi \left(\dv{x}{t}\right) &= (\mathrm{T}_{x(t)} \pi \circ \mathrm{T}_{\widetilde{\gamma}(t)} \Phi_{g(t)}) \left( \dv{\widetilde{\gamma}}{t} \right) = \mathrm{T}_{\widetilde{\gamma}(t)} \pi (\widetilde{X}_{t}(\widetilde{\gamma}(t))) \\
        & = (\pi_{*}X_{t})(\gamma(t)) = \mathrm{T}_{x(t)} \pi (X_{t}(x(t))).
        \end{split}
        \label{eq:horizontal}
    \end{equation}
    On the other hand, the vertical component of \eqref{eq:sol} is
    \begin{equation}
        \begin{split}
            \left[\bm{\eta} \left( \dv{x}{t} \right)\right]^{\#} &= \left[ \bm{\eta} \left(\mathrm{T}_{\widetilde{\gamma}(t)} \Phi_{g(t)} \left(\mathrm{T}_{g(t)} L_{g(t)^{-1}} \left( \dv{g}{t} \right) \right)^{\#} (\widetilde{\gamma}(t))\right)\right]^{\#} \\
            &= \left[ \mathrm{Ad}_{g(t)^{-1}} \mathrm{T}_{g(t)} L_{g(t)^{-1}} \left( \dv{g}{t} \right) \right]^{\#} = \left[ \mathrm{Ad}_{g(t)^{-1}} \bm{\eta}(X_{t}(\widetilde{\gamma}(t)))\right]^{\#}\\
            & = \left[ \bm{\eta} \left( \mathrm{T}_{\widetilde{\gamma}(t)} \Phi_{g(t)} (X_{t}(\widetilde{\gamma}(t))) \right)\right]^{\#} = \left[ \bm{\eta} (X_{t}(x(t)))\right]^{\#},
        \end{split}
        \label{eq:vertical}
    \end{equation}
    where we have used that, since $X_{t}$ is $G$-invariant for all $t \in \mathbb{R}$, then $[\bm{\eta}(X_{t})]^{\#}$ is also $G$-invariant for every $t \in \mathbb{R}$. From \eqref{eq:horizontal} and \eqref{eq:vertical} we see that $\dv{x}{t} = X_{t}(x(t))$ for all $t \in \mathbb{R}$, meaning that $x(t)$ is a solution of $X$. 
\end{proof}

Finally, we stress that these ideas can be extended to study broader classes of ODEs systems through Lie systems. For example, the following system on $\mathbb{R}^{2}_{x > 0}$
\begin{equation}
\dv{x}{t} =  n xy , \qquad \dv{y}{t} = -y^{2} + a_{1}(t) y + a_{2}(t),
    \label{eq:Gambier}
\end{equation}
is not a Lie system for linearly independent functions $ a_{1}(t), a_{2}(t) \in C^{\infty}(\mathbb{R})$ and $n$ a non-zero integer, and corresponds to a particular case of the so-called {\it Gambier} ($\mathrm{G}27$) {\it equation} \cite{Carinena2013,Gambier1910}. However, $Y := x \pdv{x}$ is a Lie symmetry of \eqref{eq:Gambier} inducing an $\mathbb{R}_{+}$-principal action on $\mathbb{R}^{2}_{x > 0}$.  Via the associated principal bundle $\pi: (x, y) \mapsto y$, system   \eqref{eq:Gambier} projects onto a Riccati equation 
\begin{equation}
    \dv{y}{t} = - y^{2} + a_{1}(t) y + a_{2}(t),
    \label{eq:Riccatti}
\end{equation}
which is a Lie system. Using the principal connection form $\alpha:= x^{-1} \dd x$,  solutions of  \eqref{eq:Gambier} can be reconstructed from those of \eqref{eq:Riccatti}, following the approach of Theorem~\ref{thm:reconstruction}.

A similar scenario arises in the following system on $\mathbb{R}{+} \times \mathbb{S}^{1}$ related to the Hopf bifurcation \cite[pp. 252--253]{Strogatz2018}
\begin{equation}
\dv{r}{t} = a(t) r + r^{3}, \qquad \dv{\theta}{t} = \omega(t) + \delta(t) r.
    \label{eq:Hopf}
\end{equation}
This is not a Lie system for linearly independent $a(t), \omega(t), \delta(t) \in C^{\infty}(\mathbb{R})$.  Nevertheless, $Y := \pdv{\theta}$ is a Lie symmetry of \eqref{eq:Hopf}, leading to an $\mathbb{S}^{1}$-principal bundle $\pi: (r, \theta) \mapsto r$, through which \eqref{eq:Hopf} projects onto the Lie system on $\mathbb{R}_{+}$ given by 
$$\dv{r}{t} = a(t) r + r^{3}.$$
 The reconstruction problem can be addressed via the principal connection form $\alpha := \dd \theta$, following the ansatz of  Theorem~\ref{thm:reconstruction}.

 The study of such systems deserves further analysis in future work.

\section*{Acknowledgements}

\phantomsection
\addcontentsline{toc}{section}{\\[-20pt]Acknowledgements}
\small
The author thanks  Tomasz Sobczak and Javier de Lucas for fruitful discussions. The author acknowledges financial support from the IDUB program of the University of Warsaw within the call I.1.5 with project number PSP: 501-D111-20-0001150, a fellowship (grant C15/23) funded by the Universidad Complutense de Madrid and Banco de Santander, as well as the support of Agencia Estatal de Investigaci\'on (Spain) under the grant PID2023-148373NB-I00 funded by MCIN/AEI/10.13039/501100011033/FEDER,UE and the contribution of RED2022-134301-T funded by MCIN/AEI/10.13039/501100011033 (Spain).

\small

 \end{document}